\def\N{\mathbf{N}}
\newcommand{\Char}{\operatorname{Char}}
\newcommand{\scal}[2]{\langle #1,#2\rangle}
\newcommand{\rr}[1]{\mathbf R^{#1}}
\newcommand{\nn}[1]{\mathbf N^{#1}}
\newcommand{\nm}[2]{\Vert #1\Vert _{#2}}
\newcommand{\sets}[2]{\{ \, #1\, ;\, #2\, \} }
\newcommand{\fy}{\varphi}
\newcommand{\cdo}{\, \cdot \, }
\newcommand{\sing}{\operatorname{sing supp}}
\newcommand{\eabs}[1]{\langle #1\rangle}     
\newcommand{\vrum}{\vspace{0.1cm}}
\newcommand{\WF}{W\! F}
\newcommand{\DiRel}{\thicksim _{{}_{\hspace{-0.3cm} x_0}}}
\newcommand{\cE}{\mathcal E}
\newcommand{\mascS}{\mathscr S}
\newcommand{\maclE}{\mathcal E}
\newcommand{\maclF}{\mathcal F}
\newcommand{\maclS}{\mathcal S}
\newcommand{\mascD}{\mathscr D}
\numberwithin{equation}{section}          
\newtheorem{thm}{Theorem}
\numberwithin{thm}{section}
\newcommand{\rubrik}{}
\newtheorem{prop}[thm]{Proposition}
\newtheorem{cor}[thm]{Corollary}
\newtheorem{lemma}[thm]{Lemma}
\theoremstyle{definition}
\newtheorem{defn}[thm]{Definition}
\newtheorem{example}[thm]{Example}
\theoremstyle{remark}
\newtheorem{rem}[thm]{Remark}              
\author{Stevan Pilipovi\' c}
\address{Department of Mathematics and Informatics,
University of Novi Sad, Novi Sad, Serbia}
\email{stevan.pilipovic@dmi.uns.ac.rs}
\author{Joachim Toft}
\address{Department of Computer science, Mathematics and Physics,
Linn{\ae}us University, V{\"a}xj{\"o}, Sweden
}
\email{joachim.toft@lnu.se}
\title{\textbf {Wave-front sets related to quasi-analytic Gevrey sequences}}
\keywords{Wave-front sets,}
\subjclass[2010]{35A18}
\begin{document}
\begin{abstract}
Quasi-analytic wave-front sets of distributions which correspond to the
Gevrey sequence $p!^s$, $s\in[1/2,1)$ are defined and investigated.
The propagation of singularities are deduced by considering
sequences of Gaussian windowed short-time Fourier transforms
of distributions which are modifications of the original distributions by
suitable restriction-extension techniques. Basic micro-local properties
of the new wave-fronts are thereafter established. 
\end{abstract}

\maketitle
%
%

\section{Introduction}

\par

In the literature it seems to be no (local) wave-front sets which
detect heavier singularities than singularities involved in the analytic wave-front set,
while there are different kinds of wave-front sets detecting milder singularities.
For example, if $\WF _A(f)$, $\WF _t(f)$, $t>1$ and $\WF (f)$ are the wave-front sets
of a suitable (ultra-)distribution $f$ with respect to analyticity, Gevrey class $\mathcal E^t$
and smoothness, respectively, it is well-known that
$$
\WF  (f)\subseteq \WF  _t(f)\subseteq \WF  _A(f).
$$
Here $\mathcal E^t(X)$, $t>1$, is the Roumeu  space of ultra-differentiable
functions which correspond to  the Gevrey sequence $p!^t$. (See also
Section \ref{sec1} for notations.) 
We refer to \cite{bh, CPRT1, GS, Ho1, K1, K2, P, R, sjos} for the spaces of non-quasi-analytic and quasi-analytic ultradifferentiable functions.
Note that $\WF _t(f) $
agrees to wave-front sets  $\WF  _L(f)$ of Section 8.4 in \cite{Ho1}, with
$L_p=p^t$ when $t \ge 1$. In particular, if $t=1$, then $\WF _t(f)=\WF  _A(f)$.

Let us mention that the analysis of various wave-fronts local and global, both defined by H\" ormander, and their applications
for distributions and ultradistributons, has been given in many papers
 \cite{coriascomanicia, coriascojohansontoft, cnr, Ho2, melrose, LR, R,
chungkim, sjos, W}. Note that the 
homogeneous wave-front set, used and studied in \cite{martinez, mns, melrose, mizahura,
nakamura, RZ}, is equivalent to the Gabor wave front as well as to the global one of H\" ormander, recently was 
studied in \cite{rw} and after that by
  \cite{mcrs, coriascosulc, ScW, ScW2}. We also refer to our references
\cite{P, PTT, PTT2, coriascojohansontoft}. 

\par

Actually, we will not compare  wave-fronts or consider some specific application as it is done
in many of cited papers, especially for the Schr\" odinger equations.
In this paper  we define the wave-front set ${\WF }_s(f)$, $s\in[1/2,1)$, for
$f\in \mascD  '(\mathbf R^d).$  This is done by restricting 
$f$ to a ball around  $x_0$ ($f_{res}=f_{|L(B(x_0,r)}$), and then, by
the appropriate estimate of the sequence of short-time Fourier transforms
$(V_{\phi _N} f^{rex})(x_0,\xi )$, $N \in \mathbf N$,
$\xi$ belongs to a cone $\Gamma$. Here $\phi _N =e^{-|\cdo |^2/(4N)}$
and $f^{rex}$ denotes an appropriate extension of $f_{res}$.
Our definition extends the notion of ultra-distribution wave-fronts
for $s=t>1$ and can be accommodated in order to extend the
notion of the analytic wave-front in the case $s=1$ (see Remark \ref{nim}).

\par

We  establish basic properties for the wave-front sets for $s\in[1/2,1)$.
Moreover, we introduce a subspace  $\maclE _{0,\infty}^s(\mathbf R^d)$
of the space of Gevrey ultradifferenciable functions $\maclE ^s(\rr d)$,
and analyze the local regularity of an $f\in
\mascD '(\rr d)$ with respect to both spaces.
We have
\begin{equation}\label{singsuprel}
 \sing_sf\subset \pi_1(\WF _s (f)),
\end{equation}
where $\pi _1$ is the projection $\pi _1(x,\xi ) =x$ from $ \rr {2d}$ to
$\rr d$. Considering the local singularities  with respect to
$\maclE _{0,\infty}^s$, we have
\begin{equation}\label{til}
\pi_1(\WF _s(f))\subset {\sing}_{\infty ,s}f.
\end{equation}
We also show that the wave-front set of $f\in\mascD '(\mathbf R^d)$
decreases with the differentiation as well as 
with the multiplication by a function from $\maclE _{0,\infty}^s(\mathbf R^d),
s\in [1/2,1)$.
For the former property we assume additionally that the Fourier transform of
$f$ is a polynomially bounded  locally integrable function.
Consequently, the wave-front sets here can be applied on
problems involving partial differential equations. 

\par

We prove the basic estimate of the propagation
of the wave-front, $s\in[1/2,1)$ related to   a distribution $f$
and a differential operator with constant coefficients $P(D)$:
$$
\WF _s(P(D)f) \subseteq  \WF _{s} (f)
\subseteq \WF (s,P,f) \cup \mbox{Char}(P),
$$ 
where,  $\WF (s,P,f)$ is a suitable set determined by by the
regularity of $P(D)(f^{rex})$ and the polynomial growth of
the Fourier transform of 
$f^{rex}$.

\par

\section{Gevrey wave-fronts}\label{clasgev}\label{sec1}

\par

In general it is a difficult task to examine wave-front properties of Gevrey regularity
of order $s$, when $s<1$, since the presence of suitable compactly supported
functions of such regularity are absent. In this section we introduce
a new approach in this case, based on a suitable restriction-extension technique
for the involved distributions.

\par

Before the definition of the wave-front sets, we introduce some notations.
In what follows we let $\mathcal F$ be the Fourier transform on $\mascS '(\rr d)$
which takes the form
$$
\widehat f(\xi ) = (\mathcal Ff)(\xi )\equiv (2\pi )^{-d/2}\int _{\rr d}f(x)e^{-i\scal x\xi}\, dx
$$
when $f\in \mascS (\rr d)$. In particular, if
\begin{equation}\label{Eq:Ex0Def}
E_{x_0,N}(x)=e^{-|x-x_0|^2/(4N)},\quad  x\in\mathbf R^d,\ N\in \mathbf Z_+,
\end{equation}
then
\begin{multline}\label{Eq:Ecomp1}
(\mathcal F^{-1}E_{-x_0,N})(\xi)=(\mathcal FE_{x_0,N})(\xi)
\\[1ex]
=(2N)^{d/2}e^{-i\sqrt {2N} \scal {x_0}\xi}e^{-N|\xi|^2},\quad \xi \in \rr d,
\end{multline}
and note that
$$
(2\pi )^{-d/2}(\mathcal FE_{x_0,N})(\xi)=
\frac{N^{d/2}e^{-i\sqrt {2N} \scal {x_0}\xi}e^{-N|\xi |^2}} {\pi^{d/2}}
\to e^{-|x_0|^2/2}\delta (\xi )
$$
as $N\to \infty$ with convergence in $\mascS '(\rr d)$. Here and in what follows,
$\mathbf Z_+$ denotes the positive integers, and $\mathbf N=\mathbf Z_+\cup\{0\}$.
For conveniency we set $E_N=E_{0,N}$.

\par
 
\begin{rem}\label{im}
Recall that the $d$-dimensinal Hermite polynomial of order $\alpha \in \nn d$ is given by
$$
H_\alpha (x) = (-1)^{|\alpha |}e^{|x|^2}\partial ^\alpha (e^{-|x|^2}),\qquad x\in \rr d.
$$
We have
$$
e^{-|x|^2/2}|H_\alpha(x)|\lesssim \left ( \frac 2e \right ) ^{|\alpha| /2}\alpha ^{\alpha /2},
\quad x\in\mathbf R^d, \  \alpha \in \nn d.
$$ 
This implies
\begin{alignat}{2}
e^{|x|^2/8N} |E_N^{(\alpha)}(x)| &\lesssim ({e\sqrt N})^{-|\alpha|}\alpha^{\alpha /2},&\quad 
x&\in \rr d,\ |\alpha| \leq N\in \mathbf Z_+.\label{her}
\intertext{Especially, we have}
|E_N^{(\alpha)}(x)|&\lesssim (e\sqrt N)^{-|\alpha|} \alpha^{\alpha/2}, &
\quad
x&\in \rr d,\ |\alpha|\leq N\in \mathbf Z_+. \label{der}
\end{alignat}
\end{rem}

\par

\begin{rem}\label{rr}
For future references we note that for any $l>0$  there exists $c_l>0$ such that
$$
\nm {\eabs \xi ^l\widehat E_N(\xi)}{L^1}<c_l,\quad N\in \mathbf Z_+.
$$
\end{rem}

\par

\begin{defn}
Let $X,Y\subseteq \rr d$ be open, $f\in \mascD '(X)$ and $g\in \mascD '(Y)$.
Then $g$ is called \emph{$f$-related} at $x_0\in X\bigcap Y$, if $f=g$ in an open neighborhood
of $x_0$. The notation $f \DiRel g$ is used when $g$ is $f$-related at
$x_0$.
\end{defn}

\par

Evidently, $\DiRel$ in the previous definition is an equivalence relation.

\par

\subsection{The definition of the wave-front}

\par

%

We now give the definition of regular points and wave-front sets with respect to
the Gevrey class $s\in [1/2,1)$. Here and in what follows
we let $\eabs \xi =(1+|\xi|^2)^{1/2}$.

\par

\begin{defn}\label{gnd}
Let $s\in[1/2,1)$, $X\subseteq \rr d$ be open, $f\in \mascD '(X)$,
$x_0\in X$ and $\xi_0\in\rr d\setminus\{0\}$. Then $(x_0,\xi_0)$ is
called a \emph{Gevrey regular point of order $s$ for $f$}, if for some
$g\in \mascS '(\rr d)$, an open cone $\Gamma$ of $\xi _0$, $C>0$ and $N_0\in \mathbf Z_+$
such that $f \DiRel g$ and
\begin{equation}\label{w1}
|(\maclF (g\, E_{x_0,N}))(\xi)|
\leq \frac{C^{n+1}n^{sn}}{\eabs \xi ^n}\quad 
\text{when}\quad \xi \in \Gamma,\  n\leq N,
\end{equation}
for every integer $N\ge N_0$.

\par

The complement of the set of Gevrey regular points in
$\mathbf R^d\times(\mathbf R\setminus\{0\})$ is denoted by $\WF _{s}(f)$ and is called the
\emph{$s$-wave-front set of $f$}.
\end{defn}

\par
\begin{rem}\label{multi}
Clearly, with the same asumptions, (\ref{w1}) implies that for every $k\in\mathbf N,$
\begin{equation}\label{1w1}
|(\maclF (g\, E_{x_0,kN}))(\xi)|
\leq \frac{C^{n+1}n^{sn}}{\eabs \xi ^n}\quad 
\text{when}\quad \xi \in \Gamma,\  n\leq N,
\end{equation}
\end{rem}

\par

The following result shows that the condition $n\le N$ in \eqref{w1} can be replaced by
$n\le N+N_1$ for any fixed integer $N_0\ge 0$.

\par

\begin{lemma}\label{LemmaWFChar}
Let $s\in[1/2,1)$, $x_0\in \rr d$, $g\in \mascS '(\rr d)$, $g\in \mascS '(\rr d)$,
$\Gamma \subseteq \rr d\setminus 0$ be an open cone, $N_1\ge 0$ be an
integer and let $N_0 \in \mathbf Z_+$. Then the following conditions are equivalent:
\begin{enumerate}
\item there is a constant $C>0$ such that \eqref{w1} holds for every
integer $N\ge N_0$;

\vrum

\item there is a constant $C>0$ such that
\begin{equation}\tag*{(\ref{w1})$'$}
|(\maclF (g\, E_{x_0,N}))(\xi)|
\leq
\frac{C^{n+1}n^{sn}}{\eabs \xi ^n}\quad 
\text{when}\quad \xi \in \Gamma,\  n\leq N+N_1,
\end{equation}
holds for every integer $N\ge N_0$.
\end{enumerate}
\end{lemma}

\par

\begin{proof}
It is clear that (2) implies (1). In order to prove the reversed
inclusion we only consider the case when
$x_0=0$ and $N_1=1$. The general case follows by similar
arguments and is left for the reader.

\par

We need to prove that if (1) holds, then (2) holds in the case $n=N+1$. If (1) holds, then
\begin{multline*}
|(\maclF (g\, E_{N}))(\xi)| = |(\maclF (g\, E_{N+1}E_{N(N+1)}))(\xi)|
\\[1ex]
\lesssim (N(N+1))^{d/2}\int  |(\maclF (g\, E_{N+1}))(\xi -\eta )|e^{-N(N+1)|\eta |^2}\, d\eta
\\[1ex]
\lesssim C_1^N(N+1)^{s(N+1)}\left (1+\int _{|\eta |\ge 1}\eabs {\xi -\eta}^{-N-1}
e^{-N(N+1)|\eta |^2}\, d\eta \right )
\\[1ex]
\le
\frac {2^{N+1}C_1^N(N+1)^{s(N+1)}}{\eabs \xi ^{N+1}} \left (1+\int _{|\eta |\ge 1}|\eta |^{N+1}
e^{-N(N+1)|\eta |^2}\, d\eta \right )
\\[1ex]
\lesssim
\frac {C_2^NN^{sN}}{\eabs \xi ^{N+1}} \left (1+\int _{|\eta |\ge 1}|\eta |^{N+1}
e^{-N(N+1)|\eta |^2}\, d\eta \right )
\\[1ex]
\asymp
\frac {C_2^NN^{sN}}{\eabs \xi ^{N+1}} \left (1+\Gamma ((N+d+1)/2)(N(N+1))^{-(N+d+1)/2}\right )
\\[1ex]
\lesssim \frac {C_2^NN^{sN}}{\eabs \xi ^{N+1}},
\end{multline*}
for some positive constants $C_1>0$ and $C_2>0$. Hence (2) follows.
\end{proof}

\par

In several results later on, we need that additionally $g$ in Definition \ref{gnd}
could be chosen such that
\begin{equation}\label{fbt}
\nm {\widehat{g}(\xi) \eabs \xi ^{-N}}{L^\infty}<\infty
\quad \text{for some}\ N>0.
\end{equation}

\par


\begin{example}
Let $g(x)=e^{-a|x|^2}, x\in \rr d$. Then 
$$\mathcal F(g(x)E_N(x))(\xi)=(\frac{2N}{4aN+1})^{d/2}e^{-\frac{N}{4Na+1}|\xi|^2}, \xi\in \rr d.
$$
One can simply show that (\ref{gnd}) holds in any cone. We have the similar conclusion for $x_0\neq 0.$
\end{example}

\par

\begin{example}
Let $f_n$ be a sequence of entire functions over $\mathbf C$ and
$\{ s_n \} _{n\in \mathbf Z_+}$ be a strictly decreasing sequence in $[1/2,1)$
tending to $1/2$ as $n\to \infty$.
Let the sequence of restriction of $f_n$ on $\mathbf R$ satisfy $f_n\in
\maclS ^{s_n}_{s_n}(\mathbf R)\setminus \maclS ^{s_{n+1}}_{s_{n+1}}(\rr d),$
where $\maclS ^{s_n}_{s_n}(\mathbf R)$ are Gelfand Shilov spaces.
Denote by $\chi _n$ the characteristic function of the set $(-n,-n+1)
\cup(n-1,n), n\in\mathbf Z_+$. Put $f=\sum_{n=1}^\infty \chi_nf_n$ and
 $g_n=f_n, n\in\mathbf Z_+.$ Then, $f \DiRel g_n$ for every $x_0\in(-n,-n+1)\cup(n-1,n)$. Since $g_n\in\maclS ^{s_n}_{s_n}(\mathbf R\setminus ([-n+1,n-1]\cup I_n))$, where 
$$I_n=\{n,-n,n+1,-n-1,...,n+k,-n-k,...\},$$ we obtain
that
$$
\WF _{s_n}(f)\subset \big (\big[-n+1,n-1]\cup I_n\big)
\times \rr d\setminus \{0\}, n\in\mathbf Z_+.
$$
\end{example}

\par

\begin{example}
Let $f$ be a distribution on $\mathbf R$ such that 
$$
\widehat{f}(\xi)
=
\begin{cases}
e^{-\xi^2/2}, & \xi \geq 0,
\\[1ex]
1, & \xi <0.
\end{cases}
$$
Then, 
$$f(x)=
\sqrt{\frac{\pi}{2}} \delta(x)+
\frac{1}{2i\pi}\mbox{ vp }\frac{1}{x}+(\sqrt{\frac{\pi}{2}} \delta(x)-
\frac{1}{2i\pi}\mbox{ vp }\frac{1}{x})*e^{-x^2/2}
$$
Put $g(x)=f(x), x\in\rr . $ 
Clearly,  
$f \DiRel g$ for every $x_0\in \mathbf R$.
 Moreover, 
 $$\widehat g*
 \widehat{E_{x_0,N}}(\xi)=
 2N^{1/2}e^{-i(2N)^{1/2}x_0\xi}(
 \int_{-\infty}^0e^{-N(\xi-\eta)^2}+
 \int^{\infty}_0e^{-\eta^2/4}e^{-N(\xi-\eta)^2})
 $$
 and, one can see that for every $x_0\in\mathbf R$,  
$(x_0,\xi)\in \WF _s (f)$ when $\xi <0$, while $(x_0,\xi)\notin \WF _s(f)$
when $\xi>0$, for every $s\geq1/2.$

Consider $x_0\neq 0.$ Then we can also take $f \DiRel g_0,$ where
\begin{equation}\label{ge}
g_0(x)= 
\frac{1}{2i\pi}\mbox{ vp }\frac{1}{x}+(\sqrt{\frac{\pi}{2}} \delta(x)-
\frac{1}{2i\pi}\mbox{ vp }\frac{1}{x})*e^{-x^2/2}
\end{equation}
since it is equal to $f$ in every neighbourhood of $x_0$ not containing zero.
The "bad" part is $vp\frac{1}{x}$ has the Fourier transform 
$$\mathcal F({vp\frac{1}{x}})(\xi)=-i\frac{\sqrt{\pi}}{2}\mbox{ sgn }\xi, $$ which, in convolution with $e^{-N\xi^2}$ can not be estimated   as in (\ref{w1}), neither for $\xi<0$ nor for $\xi>0$. The convolution part of $g_0$  in (\ref{ge}) may not compensate the growth of the "bad" part for $\xi<0$ or $\xi>0$, as well.
\end{example}

\par

\begin{rem}\label{nim}
In the case  $t>1, f\in\mascD (\mathbf R^d)$, the product of $f$ 
and any cut-of function $\kappa$ , with a sufficiently small support, belonging
to the space of ultra-differentiable functions  $\mascD^t(\mathbf R^d)$,
equals one in a neighborhood of $x_0$, is a suitable extension leading to
the same definition of $\WF _t(f)$.
\end{rem}

\par

\begin{rem}
In the case $s=1$, for  the analytic wave-front one has to use  
a suitable sequence of $g_N\in \mascS '(\rr d), n\in\mathbf N_+,$  such that \eqref{w1} should  be changed into
$$
| \maclF (g_N)(\xi)|=|\langle g_N,
e^{-i\scal \cdo \xi}\rangle| \leq \frac{C^{n+1}n^{sn}}{\eabs \xi ^n},
$$
$$
\xi \in \Gamma,  n\leq N, N_0<N\in\mathbf Z_+,
$$
where $g_N=f\kappa_N$, and $\kappa_N$ is a
sequence of compactly supported smooth functions equals one in a neighborhood
of $x_0$ such that for some $C>0$,
$$
|{\kappa_N^{(\alpha)}}(x)|\leq  (CN)^{|\alpha|},\quad
x\in\mathbf R^d,\ |\alpha|\leq N,
$$
see (8.4.5) in \cite[Section 8.4]{Ho1}.
\end{rem}

\par

\begin{rem}\label{cl}
Let $(x_0,\xi_0)\notin \WF _s(f)$. If $y\in B(x_0,r)$ and $\eta
\in \Gamma$, then $(y,\eta)\notin \WF _s(f)$. Thus, $\WF _{s}(f)$
is a closed set of $\mathbf R^d\times (\mathbf R^d\setminus \{ 0 \} )$.
\end{rem}

\par

\subsection{Basic properties}

The next result links the $s$-wave-front sets to Gevrey regularity of order $s\geq 1/2$.

\par

\begin{prop}\label{uljedan smer}
Let $s\in [1/2,1)$, $X\subseteq \rr d$ be open, $f \in \mascD '(X)$ and $x_0\in X$. 
Assume that there are $k\in \mathbf Z_+$, $N_0 \in \mathbf Z_+$, $C>0$ and $g\in \mascS '(\rr d)$
such that $f \DiRel g$ and
\begin{equation}\label{uNDprimebound}
| (\maclF(g E_{x_0,N} )) (\xi)| \leq \frac{C^{n+1}n^{sn}}{\eabs \xi ^n},
\quad \xi \in \rr d, \  n \leq N,
\end{equation}
for every $N\ge N_0$. Then
\begin{equation}
\label{uinEs}
\sup_{x\in U}
| D^{(\alpha) } f(x)|\le C^{|\alpha | +1} \alpha!^s,\quad
 \alpha \in  \N ^d,
\end{equation}
for some open neighborhood $U$ of $x_0$.
\end{prop}

\par

\begin{proof}
We only prove the result in the case when $N_0=1$, $x_0=0$. The general
case follows by similar arguments and is left for the reader.

\par

We have $f=g$ on $U=B_r(0)$ for some choice of $r>0$. Let
$\alpha \in \nn d$, $x\in U$ and let $C_1>C$. Then
\begin{equation}\label{Eq:FactorFrac}
\sup _{\alpha \in \nn d}\left (\frac{C^{|\alpha|+d+1}(|\alpha|+d+1)!^s}
{C_1^{|\alpha|}|\alpha|!^s} \right )<\infty ,
\end{equation}
and
\begin{equation*}
|D^\alpha (f(x)E_N(x))|
=
|D^\alpha (g(x)E_N(x))|
\lesssim I_1+I_2,
\end{equation*}
where
\begin{align*}
I_1 = \left | \int _{|\xi |\le 1}\xi^\alpha
(\maclF ( g E_N)) (\xi) e^{i \scal x\xi }\, d\xi \right |
\intertext{and}
I_2 = \left | \int _{|\xi |\ge 1}\xi^\alpha
(\maclF ( g E_N)) (\xi) e^{i \scal x\xi }\, d\xi \right | .
\end{align*}

\par

By \eqref{uNDprimebound} we get
$$
I_1\le C,\quad \alpha \in \nn d.
$$

\par

In order to estimate $I_2$ we let $n=|\alpha |+d+1$ and let $N>n$. Then
\eqref{Eq:FactorFrac} gives
$$
I_2\le C^{n+1} n!^s \int _{|\xi |\ge 1}
|\xi |^{|\alpha |-n}\, d\xi ,
$$
which implies that 
$$
\nm {D^\alpha (g E_{N})}{L^\infty (U)} \leq
C_1^{|\alpha|+d+1}(|\alpha |+d+1)!^s
\le
{C_2^{|\alpha|+1}|\alpha|^{s|\alpha|}},\quad
|\alpha | \leq N,
$$
for some positive constants $C_1$ and $C_2$. Letting $N\to \infty$, the left-hand side converges to
$\nm {D^\alpha g}{L^\infty (U)} =\nm {D^\alpha f}{L^\infty (U)}$,
and \eqref{uinEs} follows.
\end{proof}

\par

We also consider spaces as in the following definition.

\par

\begin{defn}\label{Def:SpecialSpaces}
Let $s\ge \frac 12$. Then,
\begin{enumerate}
\item $\maclE^s_{0,\infty}(\rr d)$ consists of all $\fy \in C^\infty (\rr d)$ such that
$\widehat \fy \in L^\infty (\rr d),\alpha \in \nn d$, and
\begin{equation}\label{1tri}
\nm \fy {\maclE^s_{0,\infty ,h}}\equiv  \sup _{\alpha \in \nn d}
\frac{h^{|\alpha |}\nm {\eabs \xi ^{|\alpha |}
\widehat {\fy}(\xi)}{L^\infty(\mathbf R^d)}}{\alpha!^s}<\infty ,
\end{equation}
for some $h>0$;

\vrum

\item $\maclE^s_\infty (\mathbf R^d)$ consists of all
$\fy \in C^\infty (\rr d)$ such that
\begin{equation}\label{00tri}
\nm \fy{\maclE^s_{\infty ,h}}\equiv \sup_{\alpha \in \nn d}
\frac{h^{|\alpha|}\nm {{\fy}^{(\alpha)}}{L^\infty(\rr d)}}{\alpha ! ^s}<\infty.
\end{equation}
for some $h>0$;
\end{enumerate}
\end{defn}

\par

If $A$ and $B$ are topological spaces, then $A\rightarrow B$ means
that $A\subseteq B$ and that the injection map from $A$ to $B$ is continuous, while $A\hookrightarrow B$
 additionally means that $A$ is dense in $B$.

\par

\begin{prop}\label{tri}
Let $s\ge \frac 12$. Then $\maclE^s_{0,\infty}(\rr d)\hookrightarrow
\maclE^s_{\infty}(\rr d) \rightarrow \maclE^s(\rr d)$.
\end{prop}

\par

\begin{proof}
The second embedding is an immediate consequence of the definition.
Let $\fy \in \maclE ^s_{0,\infty}(\rr d)$.  By \eqref{1tri} and the fact that $|\alpha |!
\le d^{|\alpha |}\alpha !$ we have, with suitable $h_1>0$,
$$
\frac{h^\alpha |\fy^{(\alpha)}(x)|}{\alpha!^s}
\leq \int 
\frac{h_1^{|\alpha|+d+1}|\widehat{\fy}(\xi)|\eabs \xi ^{|\alpha|+d+1}}{|\alpha|!^{s+d+1}}
\frac{d\xi}{\eabs \xi ^{d+1}}<\infty.\qedhere
$$

\par

Let $\delta_N=(\pi^{-1}N)^{d/2}e^{-N|\xi|^2}, N\in\mathbf N$ and $\theta\in\mathcal E^s_\infty (\rr d).$
Then $\theta_N=\delta_N*\theta$ is a sequence in $\maclE^s_{0,\infty}(\rr d)$ which converges to $\theta$ in
$\maclE^s_{\infty}(\rr d)$ as $N\rightarrow \infty.$
For the proof we have to use the fact that $||\delta_N||_{L^1(\rr d)}=1, N\in\mathbf N$ and
$$||(\delta_N*\theta)^{(\alpha)}||_{L^1(\rr d)}\leq ||\delta_N||_{L^1(\rr d)}||\theta^{(\alpha)}||_{L^\infty(\rr d)}
$$
and
$$||\eabs \xi ^{|\alpha|}\widehat{\delta_N*\theta}||_{L^\infty(\rr d)}=||\mathcal F^{-1}(\delta_N*\theta)^{(\alpha)}||_{L^\infty(\rr d)}\leq
c ||(\delta_N*\theta)^{(\alpha)}||_{L^1(\rr d)}.
$$
\end{proof}

\par

We have now the following wave-front result.

\par

\begin{prop}\label{prop}
Let $f \in \mascD '(\rr d)$, $P$ be a polynomial on $\rr d$,
and let $\fy \in \maclE _{0,\infty}^s(\rr d)$.
Then the following is true:
\begin{enumerate}
\item if $(x_0,\xi _0)\notin \WF _s (f)$ and $f \DiRel g$ for some $g\in \mascS '(\rr d)$
such that \eqref{fbt} holds, then $(x_0,\xi _0)\notin \WF _s(\fy  f)$.

\vrum

\item $\WF _s(P(D) f)\subseteq \WF _s (f)$.
\end{enumerate}
\end{prop}

\par

\begin{proof}
Assume that $f$ is Gevrey $s$-regular
at $(x_0,\xi _0) \in \rr d \times (\rr d \setminus\{0\})$, and choose
$g \in \mascS '(\rr d)$ such that $f \DiRel g$ and \eqref{fbt} hold. We shall
prove that $(x_0,\xi _0)\notin \WF _s(\fy  f)$ and $(x_0,\xi _0)\notin
\WF _s(P(D) f)$.
We only prove these relations in the case $x_0=0$ and $k=N_0=1$ in Definition
\ref{gnd}. The general case follows by similar
arguments and is left for the reader.

\par

(1) We have $(0,\xi _0)\notin \WF _s (f)$.
We shall apply the standard technique as in \cite[Lemma 8.1.1]{Ho1}. 
Let $\Gamma$ be an open cone such that $\xi _0\in \Gamma$ and that \eqref{w1}
holds, and let $\Gamma _1\subseteq \Gamma \cup \{ 0\}$
be a closed cone with $\xi _0$ as an interior point. Then with a suitable $c\in (0,1)$,
\begin{equation}\label{gam}
\begin{gathered}
\xi\in \Gamma_1, |\xi|>1 \mbox{ and } |\xi-\eta |\leq c|\xi |
\quad \Rightarrow \quad \eta \in \Gamma,
\\[1ex]
|\xi-\eta|\leq c|\xi | \quad \Rightarrow \quad |\xi|\leq (1-c)^{-1}|\eta|.
\end{gathered}
\end{equation}

\par

We have
$$
(\maclF (\fy  gE_N))(\xi)= I_1(\xi)+I_2(\xi),
$$
where
\begin{align*}
I_1(\xi ) &= \int_{|\xi -\eta|\leq c|\xi|} \widehat{\fy }(\xi -\eta ) (\maclF (gE_{N}))(\eta )
\, d\eta, 
I_2(\xi ) &= \int_{|\eta|\geq c|\xi|}
\widehat{\fy }(\eta) (\maclF (gE_{N}))(\xi-\eta)\, d\eta 
\end{align*}
and $\xi \in \Gamma _1\subset \Gamma$. We need to estimate $|I_1(\xi )|$
and $I_2(\xi )$ and start with the former one.

\par

We have
\begin{multline}\label{dod1}
\sup_{\xi\in\Gamma_1}|\xi|^nI_1(\xi)\leq \sup_{\xi\in\Gamma_1}|\xi |^n\sup
_{|\xi -\eta |\leq c|\xi|}| (\maclF (gE_{N}))(\eta)|
\int |\widehat{\fy }(\xi-\eta)|\, d\eta
\\[1ex]
\leq C_1(1-c)^{-n}\sup_{\eta\in\Gamma}|\eta |^n| (\maclF (gE_{N}))(\eta)|
\leq C^{n+1}n^{sn}, \quad n\leq N.
\end{multline}
Here the second inequality follows from the fact that $|\xi |\le (1-c)^{-1}|\eta |$ when
$|\xi -\eta |\le c|\xi |$.

\par

Next, we estimate  $|I_2 (\xi )|$. By \eqref{fbt} we get
\begin{multline}\label{fr}
\nm {\eabs \xi ^{-l}\mathcal F(gE_{N})(\xi)}{L^\infty}
\leq
\nm {(\eabs \xi ^{-l} (\maclF  g)(\xi))*(\eabs \xi ^l\widehat E_N(\xi))}{L^\infty}
\\[1ex]
\leq \nm {\eabs \xi ^{-l} (\maclF g)(\xi)}{L^\infty}\nm {\eabs
\xi ^l\widehat E_N(\xi)}{L^1}<\infty .
\end{multline}

Let $n\leq N$. It follows from \eqref{uNDprimebound}, \eqref{fr}
and the assumptions on $\fy$ that if $C>0$ is chosen large enough, then
\begin{multline*}
\left | \frac{|\xi|^n}{C^{n+1}n!^s}I_2(\xi) \right |
\leq \int_{|\eta|\geq c|\xi|}\frac {|\eta ^n
\widehat {\fy }(\eta)|}{C^{n+1}n!^s}
\eabs {\xi-\eta} ^l\eabs {\xi-\eta} ^{-l}| (\maclF (gE_N))(\xi-\eta)|\, d\eta
\\[1ex]
\leq C_1\int_{|\eta|\geq c|\xi|}\frac{|\eta ^n \widehat {\fy }(\eta)|}{C^{n+1}n!^s}
\eabs \eta ^{l+d+1}\frac{d\eta}{\eabs \eta ^{d+1}}
\\[1ex]
\leq C_2\sup _{|\eta|>c|\xi|}\left ( \frac{|\eta ^{n+r}\widehat {\fy }(\eta)|}
{C^{n+1}n!^s}\right )
<\infty ,
\end{multline*}
where $r>l+d+1$, for some constants $C_1$ and $C_2$. This gives
\begin{equation}\label{dod2}
|I_2(\xi)|\leq \frac {C^{n+1}n^{sn}}{|\xi|^n},\quad  \xi \in \Gamma_1,\ n\leq N
\end{equation}
for some constant $C>0$. The assertion now follows by combining
\eqref{dod1} and \eqref{dod2}.

\par

(2) The assertion follows if we prove $(0,\xi)
\notin \WF _s(\partial _{x_k}f)$, $1\le k\le d$.
Let $\xi\in\mathbf R\setminus\{0\}.$ We have
\begin{equation}\label{dod3}
\mathcal F((\partial_{x_k}g)E_N)(\xi)=i\xi_k\mathcal F(gE_N)(\xi)-
\frac{1}{2N}\mathcal F(x_kgE_N)(\xi ). 
\end{equation}
We estimate the terms on the right-hand side separately.

\par

In view of Lemma \ref{LemmaWFChar}, the first term in the right-hand side of
\eqref{dod3} can be estimated as
$$
| i\xi_k\mathcal F(gE_N)(\xi) | \le \frac{C^{n+1}n^{sn}}{\eabs \xi ^{n-1}}
\lesssim
\frac{C_1^{n}(n-1)^{s(n-1)}}{\eabs \xi ^{n-1}},
\quad \xi \in \Gamma ,\ n\le N+1,
$$
for some constants $C$ and $C_1$. Hence
\begin{equation}\label{Eq:FirstTerm1}
| i\xi_k\mathcal F(gE_N)(\xi) | \le \frac{C^{n+1}n^{sn}}{\eabs \xi ^n},
\quad \xi \in \Gamma ,\ n\le N,
\end{equation}
for some constant $C$.

\par

Differentiating \eqref{Eq:Ecomp1}, using that 
$$
|\mathcal F(x_kE_N)(\xi)|=|\partial_{\xi_k}\mathcal F(E_N)(\xi)|
,\quad  \xi \in \rr d,
$$
and taking $\sqrt N\xi$ as new variables of integration we obtain
$$
\frac 1{2N}\int | \mathcal F(x_kE_N)(\xi)|\, d\xi =C_1\int \xi_ke^{-N|\xi|^2}N^{d/2}\, d\xi 
\le C_2N^{-1/2}\le C_2,
$$
for some constants $C_1$ and $C_2$. Hence, substituting $N$ by $2N$
we get
$$
\frac{1}{2N}\int | \maclF (x_kE_{2N})(\xi)|\, d\xi <C,
$$
where $C$ is independent of $N$.
Thus, if $\Gamma _1$ and $\Gamma$ are the same as in the first
part of the proof, it follows from that part that for the second therm in \eqref{dod3} we have, using in the end (\ref{1w1}),
\begin{multline*}
\sup _{\xi \in \Gamma _1} \left (
\frac{\eabs \xi ^n}{C^{n+1}n!^s}|\mathcal F(g(\partial_{x_k}E_N))(\xi)| \right )
\\[1ex]
\frac 1{2N}\sup _{\xi \in \Gamma _1} \left (
\frac{\eabs \xi ^n}{C^{n+1}n!^s}||\mathcal F(gE_{2N})|*|\mathcal F(\partial_{x_k}E_{2N})|(\xi)|
\right )
\\[1ex]
\leq
\sup _{\xi \in \Gamma} \left ( \frac{\eabs \xi ^n}{C^{n+1}n!^s}|\maclF (gE_{2N})(\xi)| \right )
\frac{1}{2N}\int| \maclF (x_kE_{2N}))(\xi)|\, d\xi <C,\quad n\leq N.
\end{multline*}
where $C>0$ is a suitable constant not depending on $n$ and $N$,
and the assertion follows.
\end{proof}

\par

\section{Local regularity}\label{sec2}

\

\begin{prop}\label{drugi smer}
Let  $U\subseteq \rr d$ be open, $x_0\in U$,  
$f\in \in \mascD '(U)$, and assume that $g\in \maclE _{0,\infty}^s (\rr d)$
be such that $f \DiRel g$. 
Then there exists $C >0$ such that 
\eqref{uNDprimebound} holds for $\maclF (gE_N)$.
\end{prop}

\par
 
\begin{proof}
Let $n\leq N\in\mathbf R^d$. Then
$$
\sup_{\xi\in\mathbf R^d}|\eabs \xi ^n
(\maclF (gE_N))(\xi)|\le \sup_{\xi\in\mathbf R^d}|\eabs \xi ^n \widehat{g}(\xi )|
\nm {\eabs \xi ^n \widehat E_N (\xi)}{L^1(\mathbf R^d)},
$$
and the result follows from the fact that $\nm {\eabs \xi ^n\widehat E_N(\xi)}
{L^1(\mathbf R^d)}<c$, for some $c$ which is independent of $n$ and $N$.
\end{proof}

\par

%
As a consequence we have the following. Here ${\sing} _{\infty, s} f$ is
the set of points $x\in\mathbf R^d$ such that
it does not exist any $g\in \maclE _{0,\infty}^s(\rr d)$ such that $f \DiRel g$.

\par

\begin{cor}\label{contra}
Let U be open and $x_0\in U$, $f\in \mascD '(U)$ and $g\in \maclE _{0,\infty}
^s(\mathbf R^d)$ be such that $f \DiRel g$. Then $(x_0,\xi)\notin \WF  _s(f)$,
for any $\xi\in\mathbf R^d\setminus\{0\}$.
In particular, \eqref{til} holds.
\end{cor}

\par

Definition \ref{gnd} and the compactness of the sphere $\mathbf S^{d-1}$ imply the next proposition.

\par

\begin{prop}\label{cont}
Let $f\in\mascD '(\rr d)$ and  $x_0\in \mathbf R^d$ be such that  $(x_0,\xi)
\notin{\WF _s} (f)$ for every $\xi\in\mathbf R^d\setminus\{0\}$.
Then there exists an open neighbourhood $U$ of $x_0$ such that  $f\big \vert _{U}
\in \maclE ^s(U)$.
\end{prop}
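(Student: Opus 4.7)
The plan is to reduce the claim to Proposition~\ref{uljedan smer} by constructing, via compactness of the sphere, a single suitable extension of $f_{|U}$ whose Fourier transform satisfies the global bound (\ref{uNDprimebound}). For each direction $\xi\in\mathbb S^{d-1}$, the hypothesis $(x_{0},\xi)\notin WF_{s}(f)$ furnishes an open cone $\Gamma(\xi)\ni\xi$, a radius $r(\xi)>0$, an extension $f^{rex}_{\xi}$ of $f$ from $L(x_{0},r(\xi))$, and parameters $v(\xi),C(\xi),N_{0}(\xi)$ making (\ref{w1}) hold on $\Gamma(\xi)$. Compactness of $\mathbb S^{d-1}$ yields a finite subcover $\Gamma_{1},\dots,\Gamma_{k}$; after replacing $v,C,N_{0}$ by their maxima and $r$ by the minimum of the associated radii, I have extensions $f^{rex}_{1},\dots,f^{rex}_{k}\in (S^{1/2}_{1/2})'(\mathbb R^{d})$, all coinciding with $f$ on a common ball $L(x_{0},r)$ and each satisfying (\ref{w1}) on its cone $\Gamma_{j}$. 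I then fix closed subcones $\Gamma_{j}'\subset\subset\Gamma_{j}$ whose union still covers $\mathbb R^{d}\setminus\{0\}$.

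As the common extension I take $f^{rex}:=\psi f$, where $\psi\in\mathcal D(L(x_{0},r))$ equals $1$ on $U:=L(x_{0},r/2)$. Being compactly supported, this is a tempered distribution lying in $(S^{1/2}_{1/2})'(\mathbb R^{d})$ and extending $f_{|U}$; the identity $\psi f^{rex}_{j}=\psi f$, valid for every $j$, gives on each $\Gamma_{j}'$ the convolution representation
$$
\widehat{f^{rex}\, E_{x_{0},vN}}(\xi)=(2\pi)^{-d}\bigl(\widehat{\psi}\ast\widehat{f^{rex}_{j}\, E_{x_{0},vN}}\bigr)(\xi).
$$
Mimicking the proof of Proposition~\ref{prop}~a), I split this convolution into the region $\{|\xi-\eta|\le c|\xi|\}$, which for small $c$ is contained in $\Gamma_{j}$ and where the defining cone bound on $\widehat{f^{rex}_{j}\, E_{x_{0},vN}}$ applies, and the complementary region $\{|\xi-\eta|\ge c|\xi|\}$, where $\widehat{\psi}\in\mathcal S$ produces rapid decay in $|\xi|$. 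Carrying out the estimate cone by cone and using $\bigcup_{j}\Gamma_{j}'=\mathbb R^{d}\setminus\{0\}$, I arrive at
$$
|\widehat{f^{rex}\, E_{x_{0},vN}}(\xi)|\le \frac{\tilde C^{\,n+1}n^{sn}}{\langle\xi\rangle^{n}},\qquad \xi\in\mathbb R^{d},\ n\le N,\ N>N_{0},
$$
after which Proposition~\ref{uljedan smer} delivers $f\in\mathcal E^{s}(U')$ on a neighborhood $U'\subset U$ of $x_{0}$.

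The main obstacle is the complementary piece of the convolution: in order to balance the Schwartz decay of $\widehat{\psi}(\xi-\eta)$ one needs a tempered-type control on $\widehat{f^{rex}_{j}\, E_{x_{0},vN}}(\eta)$ for $\eta$ outside $\Gamma_{j}$, in the spirit of condition~(\ref{fbt}) which is invoked explicitly in Proposition~\ref{prop}~a). This control is not part of Definition~\ref{gnd}, and in the quasi-analytic regime $s\in[1/2,1)$ no Gevrey cutoff is available to bypass it; one must instead extract it from the membership $f^{rex}_{j}\in(S^{1/2}_{1/2})'(\mathbb R^{d})$ together with the Gelfand--Shilov regularity of the Gaussian window $E_{x_{0},vN}$, with constants uniform in $N$ and compatible with the growth rate $n^{sn}$. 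Verifying this uniformity is the technical heart of the argument and the point at which the restriction $s\ge 1/2$ genuinely intervenes.
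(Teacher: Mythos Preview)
The paper gives no detailed proof here: it simply precedes the statement with the remark that it follows ``by Definition~\ref{gnd}, the compactness of the sphere $\mathbb S^{n-1}$ (as in the case of the analytic wave front~\cite{Ho1}).'' In particular, the question of how the several extensions $f^{rex}_j$ attached to different directions are to be reconciled into a single global estimate is not addressed in the paper either.

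Your attempt to make this precise by passing to the common extension $f^{rex}=\psi f$ with $\psi\in\mathcal D(L(x_0,r))$ has a genuine gap, and it is not quite the one you diagnose. Even granting a uniform tempered bound on $\widehat{f^{rex}_j\,E_{x_0,vN}}$, the complementary piece of your convolution still cannot produce the rate~(\ref{uNDprimebound}). If one traces the estimate of $I_2$ in the proof of Proposition~\ref{prop}~a) with $\psi$ playing the role of $\theta$, the inequality that is actually needed is
\[
|\eta|^{\,n+r}\,|\widehat\psi(\eta)|\;\le\;C^{\,n}\,n!^{\,s}\qquad\text{uniformly in }n,
\]
which for a compactly supported $\psi$ amounts to $\|D^\alpha\psi\|_{L^\infty}\le C^{|\alpha|}\alpha!^{\,s}$. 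For $s<1$ this forces $\psi\equiv 0$ by the Denjoy--Carleman theorem; the best a nontrivial $\psi\in C_c^\infty$ can do is $C^{\,n}n!^{\,t}$ with some $t>1$, which yields only $I_2\le C^{\,n}n!^{\,t}/|\xi|^n$ in place of the required $C^{\,n}n!^{\,s}/|\xi|^n$. In other words, bringing a $C_c^\infty$ cutoff back into the argument reimports exactly the obstruction that the restriction--extension mechanism of the paper was introduced to circumvent. The lack of uniform tempered control on the STFT that you single out is a separate (and also real) difficulty, but resolving it would not save this route.
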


\par

Now, we compare the projections of these sets with the singular support
with respect to $\cE ^s$.

\par

\begin{thm}\label{lemica}
Let $s\in [1/2,1)$,  $f \in \mascD ' (\mathbf R^d)$,
$K\subseteq \rr d$ be compact, and let $F$ be a closed cone.
If $ {\WF }_s (f) \cap (K \times F) =
\emptyset$. Then there exist an open set $U$, an open cone
$\Gamma$ and $g\in \mascS '$ such that $f=g$ on $U$, 
$
K\times F\subset U\times\Gamma
$
and for some $C>0$,
\begin{equation}\label{2hatchiu}
| (\maclF (gE_{N})) (\xi)| \leq C^{n+1} \frac{n!^s}{\eabs \xi ^{n}}, \xi \in \Gamma, 
\quad n\leq  N\in \N.
\end{equation}
\end{thm}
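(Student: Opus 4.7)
My approach has three stages: a compactness reduction, selection of a single suitable extension, and verification of the uniform bound.

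\emph{Stage 1 (Finite cover).} For each $(x_0,\xi_0) \in K \times (F \cap \mathbb{S}^{d-1})$ the hypothesis $WF_s(f) \cap (K \times F) = \emptyset$ together with Definition \ref{gnd} provides a radius $r > 0$, an open cone $\Gamma_{x_0,\xi_0} \ni \xi_0$, a suitable extension $f^{rex}_{x_0,\xi_0}$ of $f|_{L(x_0,r)}$, and constants $v \in \N$, $C > 0$, $N_0 \in \N$ for which \eqref{w1} holds on $\Gamma_{x_0,\xi_0}$. The products $L(x_0,r/2) \times \Gamma_{x_0,\xi_0}$ form an open cover of the compact set $K \times (F \cap \mathbb{S}^{d-1})$, so I extract a finite subcover yielding balls $L(x_i, r_i/2)$, cones $\Gamma_i$, extensions $f^{rex}_i$, and (finitely many, hence uniformizable) parameters $v_*, C_*, N_{0,*}$ for $i=1,\dots,m$. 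I then set $U := \bigcup_{i=1}^m L(x_i, r_i/2) \supset K$ and pick an open cone $\Gamma$ with $F \setminus \{0\} \subset \Gamma$ and $\overline{\Gamma} \cap \mathbb{S}^{d-1} \subset \bigcup_i \Gamma_i$, so that $K \times F \subset U \times (\Gamma \cup \{0\})$ as required.

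\emph{Stage 2 (Single extension).} I fix any suitable extension $f^{rex}$ of $f|_U$ in $(S^{1/2}_{1/2})'(\mathbb R^d)$, whose existence is granted by the general extension machinery invoked in Section \ref{clasgev}. Because quasi-analytic partitions of unity are unavailable in the range $s \in [1/2,1)$, I do \emph{not} attempt to glue the $f^{rex}_i$ by cut-offs; instead the $f^{rex}_i$ remain auxiliary objects, the essential feature being that each difference $h_i := f^{rex} - f^{rex}_i$ vanishes on $L(x_i, r_i/2) \subset U$.

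\emph{Stage 3 (Uniform bound).} For $\xi \in \Gamma$ I choose an index $i$ with $\xi \in \Gamma_i$ and decompose
$$
\widehat{f^{rex} E_N}(\xi) \;=\; \widehat{f^{rex}_i E_N}(\xi) \;+\; \widehat{h_i E_N}(\xi).
$$
For the first summand I pass from $E_N = E_{0,N}$ to the shifted Gaussian $E_{x_i, v_* N}$ of \eqref{w1} by writing $E_N = E_{x_i, v_* N} \cdot G_i$, where $G_i$ is itself a Gaussian whose Fourier transform has polynomially weighted $L^1$ moments uniformly bounded in $N$ (cf.\ Remark \ref{rr}); then $\widehat{f^{rex}_i E_N} = \widehat{f^{rex}_i E_{x_i, v_* N}} * \widehat{G_i}$, and the cone-splitting argument from the proof of Proposition \ref{prop}\,a), with inner cone $\overline{\Gamma \cap \Gamma_i}$, extracts a bound of the form $C^{n+1} n^{sn}/\langle\xi\rangle^n$; Stirling then upgrades $n^{sn}$ to $n!^s$. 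For the second summand, $h_i \in (S^{1/2}_{1/2})'(\mathbb R^d)$ is supported off $L(x_i, r_i/2)$, so its pairing with $E_N(\cdot) e^{-i\xi \cdot t}$ inherits the Gaussian decay of $E_N$ away from $\mathrm{supp}\,h_i$, producing an exponentially small prefactor which, for $n \leq N$, is absorbed into $C^{n+1} n!^s / \langle\xi\rangle^n$ after enlarging $C$. Summing over the (finitely many) choices of $i$ yields \eqref{2hatchiu}.

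The principal obstacle is Stage 3, specifically the handling of the remainder $\widehat{h_i E_N}$: one must extract Gevrey decay from the difference of two distributional extensions without the aid of quasi-analytic cut-offs, relying solely on the Gaussian localization of $E_N$ and the $(S^{1/2}_{1/2})'$-regularity of the extensions. The passage between $E_N$ and $E_{x_i,v_* N}$ is the other delicate point, resolved by the $L^1$-control of polynomially weighted Fourier Gaussians noted in Remark \ref{rr}.
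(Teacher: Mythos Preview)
Your Stage~3 treatment of the remainder $\widehat{h_i E_N}$ has a genuine gap. You argue that since $h_i$ is supported off $L(x_i, r_i/2)$, the pairing $\langle h_i, E_N e^{-i\langle\cdot,\xi\rangle}\rangle$ inherits ``Gaussian decay of $E_N$ away from $\operatorname{supp} h_i$''. But $E_N(x)=e^{-|x|^2/(4N)}$ is centred at the origin (not at $x_i$) and has width of order $\sqrt N$; on any fixed bounded region --- in particular near $\partial L(x_i,r_i/2)$, where $h_i$ may concentrate --- one has $E_N(x)\to 1$ as $N\to\infty$, so the window furnishes no smallness at all. Even if you could manufacture a prefactor of size $e^{-cN}$, that alone carries no decay in $\xi$: you would need $e^{-cN}\le C^{n+1}n!^s/\langle\xi\rangle^n$ for every $n\le N$ and every $\xi\in\Gamma$, which fails once $\langle\xi\rangle$ is large. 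The difference $h_i=f^{rex}-f^{rex}_i$ is only known to lie in $(S^{1/2}_{1/2})'$ and to vanish on one small ball; the non-localised window $E_N$ samples it everywhere else, where you have no control. You rightly identify this as the principal obstacle, but the mechanism you propose does not resolve it.

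A second, smaller issue: the factorisation $E_N=E_{x_i,v_*N}\cdot G_i$ gives
\[
G_i(x)=\exp\!\Bigl(-\tfrac{|x|^2}{4N}+\tfrac{|x-x_i|^2}{4v_*N}\Bigr),
\]
whose quadratic coefficient in $x$ is $(1-v_*)/(4v_*N)$. For $v_*=1$ this vanishes and $G_i$ is a pure exponential, not a Gaussian, so Remark~\ref{rr} is inapplicable; even for $v_*>1$ the centre of $G_i$ drifts with $x_i$ and $N$, and an explicit moment computation is needed rather than a bare citation. This is repairable (pass first to $E_{x_i,2M}$ via $E_{x_i,M}=E_{x_i,2M}^{\,2}$ and relabel), but it should be written out.

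The paper's own argument is less ambitious: it treats the model situation $x_0=0$, $v=1$, $N_0=1$, uses only the identity $E_N=E_{2N}\cdot E_{2N}$ together with Peetre's inequality to compare $\widehat{E_N f^{rex}}$ with $\widehat{E_{2N} f^{rex}}$, and then sketches the standard compactness reduction (finite cover of $F\cap\mathbb S^{d-1}$, intersection of balls, then finite cover of $K$) without attempting to merge the local extensions into a single global one. Your programme of fixing one $f^{rex}$ on all of $U$ and reconciling it with the $f^{rex}_i$ is the more honest one, but the reconciliation step is exactly where the absence of quasi-analytic cut-offs bites, and the current $h_i$ estimate does not survive it.
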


\par

\begin{proof}
Let $K=\{x_0\}, \xi_0\in F=\Gamma _{\xi_0}$ be a closed conic neighbourhood of
$\xi _0$ contained in an open cone $\Gamma_0$ such that \eqref{w1} holds in $\Gamma_0.$

\par

For the sake of simplicity, assume $x_0=0$ and $N_0=1$.
We  conclude the assertion in  the case 
$K=\{x_0\}, F=\Gamma_{\xi_0}, U=B(x_0,r)$ and $\Gamma=\Gamma_0.$ 

\par

Let  $K=\{x_0\}$ and  $F$ be a closed cone. We note that the intersection of $F$ with
the unit sphere is compact. Hence we may choose a finite number
of balls, $B(x_0,r_{x_0,\xi _j})$, closed cones $\Gamma_{\xi_j}$ compactly included in open cones 
$\Gamma_j, j=1,\dots ,k$ (\eqref{w1} holds in $\Gamma_j$),  then
take for $U$ the intersection of open balls, and for $\Gamma $,
$
\Gamma \equiv \bigcup _{j=1}^k\Gamma _j.
$
 
\par

Finally, since $K$ is compact, we may cover $K$  by finite
number of open balls  $B_{x_k}$, $ k = 1,\dots, m$,
and repeat the procedure for every ball. 
\end{proof}

\par

The following result links the  ${\sing}_s$  with th $s$-wave-front set.

\par

\begin{thm} \label{th16}
Let $s\in [1/2,1)$ and $f\in \mascD  ' (\mathbf R^d)$. Then \eqref{singsuprel} holds.
\end{thm}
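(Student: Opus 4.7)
The plan is to prove the contrapositive: if $x_0\notin\pi_1(WF_s(f))$, then $x_0\notin \sing_s f$, i.e.\ $f|_U\in \mathcal E^s(U)$ for some open neighbourhood $U$ of $x_0$. By the definition of $\pi_1$, the hypothesis says that $(x_0,\xi)\notin WF_s(f)$ for every $\xi\in\mathbb R^d\setminus\{0\}$.

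Next I would apply Theorem \ref{lemica} with $K=\{x_0\}$ and $F$ a closed cone whose intersection with $\mathbb S^{d-1}$ is as large as possible, then iterate, or equivalently invoke the compactness argument in the proof of that theorem directly. For each $\xi_0\in \mathbb S^{d-1}$ Definition \ref{gnd} furnishes an open cone $\Gamma_{\xi_0}$, parameters $v_{\xi_0},N_{0,\xi_0},C_{\xi_0}$ and an extension for which the estimate \eqref{w1} holds. By compactness of $\mathbb S^{d-1}$, finitely many of these cones $\Gamma_{\xi_1},\dots ,\Gamma_{\xi_k}$ already cover $\mathbb R^d\setminus\{0\}$. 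Setting $U$ to be the intersection of the corresponding open balls, taking a common $v$, $N_0$ (the maximum of the ones in the cover) and a common $C$ (the maximum of the constants), and exactly as in the proof of Theorem \ref{lemica} choosing a single extension $f^{rex}$ from $U$ to $\mathbb R^d$, we obtain
\[
 |\widehat{f^{rex} E_{x_0,vN}}(\xi)|\le \frac{C^{n+1}n^{sn}}{\langle \xi\rangle ^n},\qquad \xi\in \bigcup_{j=1}^k \Gamma_{\xi_j},\ n\le N,\ N>N_0.
\]
Since $\bigcup_j \Gamma_{\xi_j}\supseteq \mathbb R^d\setminus\{0\}$ and the right hand side is $\ge 1$ for $n=0$, the same estimate survives, after enlarging $C$, for all $\xi\in \mathbb R^d$; this is precisely the hypothesis \eqref{uNDprimebound} of Proposition \ref{uljedan smer}.

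Applying Proposition \ref{uljedan smer} then yields a possibly smaller neighbourhood of $x_0$ on which the Gevrey $s$-estimates on the derivatives of $f$ hold, i.e.\ $f\in \mathcal E^s(U)$. Consequently $x_0\notin \sing_s f$, which gives the desired inclusion \eqref{singsuprel}.

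The main technical point is the transition from the directional bounds \eqref{w1}, each coming with its own extension, open cone, and constants, to a single estimate valid on \emph{all} of $\mathbb R^d$ with a single extension. This is the compactness/finite-cover step together with the selection of one common $f^{rex}$; it is exactly the content of Theorem \ref{lemica}, and invoking it rather than reproving it keeps the argument short. Everything else is bookkeeping.
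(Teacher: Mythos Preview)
Your proposal is correct and follows essentially the same route as the paper: argue by contraposition, pass from the pointwise microlocal regularity at every $\xi\neq 0$ to a global estimate of the form \eqref{uNDprimebound} by the compactness/finite-cover argument encapsulated in Theorem~\ref{lemica}, and then invoke Proposition~\ref{uljedan smer} to obtain $f\in\mathcal E^s(U)$. The paper's own proof is terser---it asserts the global bound \eqref{1hatuN} directly after noting $WF_s(f)\cap(U\times\rr d)=\emptyset$ and then cites Proposition~\ref{uljedan smer}---but the underlying mechanism is identical; you have simply made the appeal to Theorem~\ref{lemica} explicit.
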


\par

\begin{proof}
%
Assume that $(x_0, \xi_0)\notin \WF _s (f)$ for all
$ \xi_0 \in \rr d \setminus 0 $. Then there is a neighborhood
$U$ of $ x_0 $ such that $ \WF _s (f) \cap
(U \times \rr d) = \emptyset$ and $g\in \mascS '$ equal to $f$ on $U$.
As before, we only consider the case when
$x_0=0$, $k=1$ and $N_0=1$. Then
\begin{equation}\label{1hatuN}
|(\maclF (g E_N)) (\xi) |\leq \frac{C^{n+1} n!^s}{\eabs \xi ^{n}},\ 
\xi \in \rr d,\  n\leq N,\quad N \in \mathbf Z_+,
\end{equation}
holds for some $C>0$. By Proposition \ref{uljedan smer},
we conclude that $g\in  {\cE} ^s(\mathbf R^d)$.
That is,  $ 0 \not\in \sing _s f$. 
\end{proof}

\par

The next  statement
is a straight-forward consequence of the definition and previous
results. 

\par

\begin{prop}\label{th17}
Let $s\in (0,1]$, $f \in \mascD ' (\mathbf R^d)$ and
$ 1/2\leq s_1 < s_2 \leq 1$. Then
${ \WF _{s_2} (f)} \subset \WF _{s_1} (f).$
\end{prop}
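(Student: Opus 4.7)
The plan is to exploit the elementary monotonicity of $n^{sn}$ in the parameter $s$; this comparison is essentially forced by the defining inequality \eqref{w1}.

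First, suppose $(x_0,\xi_0)\notin WF_{s_1}(f)$. By Definition \ref{gnd} there exist a suitable extension $f^{rex}\in (S^{1/2}_{1/2}(\mathbb R^d))'$, an open cone $\Gamma\ni\xi_0$, and constants $v\in\mathbb N$, $C>0$, $N_0\in\mathbb N$ such that
$$
|\langle f^{rex}(t), E_{x_0,vN}(t)e^{-i\langle t,\xi\rangle}\rangle| \le \frac{C^{n+1}n^{s_1 n}}{\langle\xi\rangle^n},\qquad \xi\in\Gamma,\ n\le N,\ N_0<N\in\mathbb N.
$$
The crucial observation is now purely elementary: for every $n\ge 1$ we have $\log n\ge 0$, so $s_1<s_2$ gives $s_1 n\log n\le s_2 n\log n$, i.e.\ $n^{s_1 n}\le n^{s_2 n}$; for $n=0,1$ both sides are equal to $1$. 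Hence the very same extension, cone, and constants yield
$$
|\langle f^{rex}(t), E_{x_0,vN}(t)e^{-i\langle t,\xi\rangle}\rangle| \le \frac{C^{n+1}n^{s_2 n}}{\langle\xi\rangle^n}
$$
in the same range. Note that the notion of \emph{suitable extension} depends only on membership in $(S^{1/2}_{1/2}(\mathbb R^d))'$ and is independent of $s$, so $f^{rex}$ remains an admissible extension in the $s_2$-setting.

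If $s_2\in[1/2,1)$ this immediately gives $(x_0,\xi_0)\notin WF_{s_2}(f)$ via Definition \ref{gnd}. The only minor caveat is the boundary case $s_2=1$, where Remark \ref{nim} replaces a single extension by a sequence $f_N^{rex}$; here one simply takes the constant sequence $f_N^{rex}:=f^{rex}$, which is admissible and for which the estimate just established is exactly of the required form. By contraposition, $WF_{s_2}(f)\subset WF_{s_1}(f)$.

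The main (and essentially only) step is the monotonicity $n^{s_1 n}\le n^{s_2 n}$; there is no serious obstacle, and the argument does not require the more involved machinery of Section \ref{clasgev} such as Theorem \ref{lemica} or Proposition \ref{uljedan smer}.
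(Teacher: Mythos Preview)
Your argument is correct and is exactly the ``straight-forward consequence of the definition'' that the paper invokes without spelling out a proof: the only ingredient is the monotonicity $n^{s_1 n}\le n^{s_2 n}$ applied to the right-hand side of \eqref{w1}, together with the observation that the notion of suitable extension is $s$-independent. Your handling of the boundary case $s_2=1$ via a constant sequence is a reasonable reading of Remark~\ref{nim}, and in any event the case $s_1,s_2\in[1/2,1)$ covered by Definition~\ref{gnd} is the main content.
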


\par

\section{Wave-front of $P(D)f=h$}\label{sec3}

\par

Let $D^\alpha=(-i)^{|\alpha|}\partial^{\alpha_1+...+\alpha_d}/(\partial_{x_1}^{\alpha_1}...\partial_{x_d}^{\alpha_d})$,
 $P(D)=\sum_{|\alpha|\leq m}a_\alpha D^{\alpha}$ be a differential operator with constant coefficients, $P_m(\xi)=\sum_{|\alpha|= m}a_\alpha \xi^{\alpha}$ its principal symbol, and
$f\in \mascD '(\mathbf R^d)$. Recall, $\Char (P)$ is defined by
$\mbox{Char} (P)=\{(\xi)\in T^*(X)\setminus 0, P_m(\xi)\neq 0\}$.

\par

\begin{defn}\label{P-u}
The set $\mbox{Reg}(s,P,f)$ consists of all points $(x_0,\xi_0)\in
\rr d \times (\rr d\setminus\{0\} )$ such that for some $v\in \mascS '(\rr d)$,
the following conditions hold true: 
\begin{enumerate}
\item $f \DiRel g$ and \eqref{fbt} holds true;
%
%

\vrum

\item for some open conical neighborhood $\Gamma$ of $\xi _0$,
some $N_0\in \mathbf Z_+$ and $C>0$, \eqref{w1} holds
with $P(D)g$ in place of $g$, for every $N\ge N_0$. 
%
%
\end{enumerate}

\par

The  complement of $\mbox{Reg}(s,P,f)$ is denoted by $\WF (s,P,f)$.
\end{defn}
 
\par
 
Evidently, $\WF (s,P,f)$  is a closed set. 

\par

\begin{rem}\label{hyp} 
The assumption 
\eqref{hyp} is needed in the proof of the next theorem and
it is an open problem whether this theorem holds in a more
general case.
\end{rem}

\par

\begin{thm}\label{glav}
Let $P(D)$ be a differential operator with constant coefficients and
$f\in \mascD '(\mathbf R^d)$. Then, for $s\in[1/2,1)$,
\begin{equation}\label{pwf}
\WF _{s}(P(D)f)\subset \WF _{s}(f) \subset 
\WF (s,P,f)\cup \mbox{Char}(P).
\end{equation}
\end{thm}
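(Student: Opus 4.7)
The first inclusion $WF_s(P(D)u) \subseteq WF_s(u)$ is precisely the content of Proposition \ref{prop}(b) and is already established there.

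For the second inclusion, I would argue by contraposition: fix $(x_0,\xi_0)$ lying outside both $WF(s,P,u)$ and $\Char(P)$, and establish $(x_0,\xi_0) \notin WF_s(u)$. The first assumption supplies a suitable extension $u^{rex}$, an integer $v$, an open cone $\Gamma \ni \xi_0$, and a threshold $N_0$ so that \eqref{gr} controls $|\widehat{u^{rex}}(\xi)| \leq C\langle\xi\rangle^l$ and \eqref{ww1} bounds $\widehat{(P(D)u^{rex})\, E_{x_0,vN}}(\xi)$ on $\Gamma$ for $n\leq N$, $N>N_0$. The ellipticity assumption $P_m(\xi_0) \neq 0$ produces a smaller conic neighbourhood $\Gamma' \subset \Gamma$ of $\xi_0$ and an $R > 0$ on which $|P(\xi)| \geq c\langle\xi\rangle^m$ for $|\xi| \geq R$.

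The main identity is Leibniz's formula
\begin{equation*}
P(\xi)\, \widehat{u^{rex} E_{x_0,vN}}(\xi) \;=\; \widehat{(P(D)u^{rex})\, E_{x_0,vN}}(\xi) \;+\; \sum_{1 \leq |\alpha| \leq m} \frac{1}{\alpha!}\widehat{\bigl(P^{(\alpha)}(D) u^{rex}\bigr)\, D^\alpha E_{x_0,vN}}(\xi),
\end{equation*}
so that dividing by $P(\xi)$ on $\Gamma' \cap \{|\xi| \geq R\}$ expresses $F_N(\xi) := \widehat{u^{rex} E_{x_0,vN}}(\xi)$ as a principal term plus error terms. The principal term inherits \eqref{ww1} with an additional $\langle\xi\rangle^{-m}$ gain, yielding the target Gevrey estimate immediately. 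For each error term I would use the convolution representation $\widehat{(P^{(\alpha)}(D)u^{rex})\, D^\alpha E_{x_0,vN}} = (2\pi)^{-d}\bigl(P^{(\alpha)}\widehat{u^{rex}}\bigr) \ast \widehat{D^\alpha E_{x_0,vN}}$ together with the explicit formula $\widehat{D^\alpha E_{x_0,vN}}(\zeta) = c(vN)^{d/2}\zeta^\alpha e^{-ix_0\zeta}e^{-vN|\zeta|^2}$, and split the $\eta$-integration into $|\xi-\eta| \leq c|\xi|$ and its complement: on the complement the Gaussian factor produces super-polynomial decay in $|\xi|$; on the main region, \eqref{gr} combined with the substitution $\zeta = \xi - \eta$ gives a contribution of order $\langle\xi\rangle^{m+l-|\alpha|}(vN)^{-|\alpha|/2}$.

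The main obstacle is matching this error bound to the target rate $C^{n+1}n^{sn}/\langle\xi\rangle^n$ uniformly in $n \leq N$ and $\xi \in \Gamma'$: after division by $|P(\xi)|$ the leading ($|\alpha|=1$) contribution is only of order $\langle\xi\rangle^{l-1}/\sqrt{vN}$, and absorbing it into the Gevrey envelope requires the subtle interplay between $n$, $\langle\xi\rangle$, and $N$ enabled by $s<1$ (so that $n^{sn}$ grows super-polynomially in $n$ near the optimal choice $n \sim \langle\xi\rangle^{1/s}/e$), together with the freedom to enlarge $v$ and $N_0$. This delicacy is precisely what is flagged in Remark \ref{hyp}: the polynomial control \eqref{gr} on $\widehat{u^{rex}}$ is what keeps the low-frequency part of the convolution from blowing up, and without it no analogous theorem is currently available. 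Once the calibration of parameters is carried out, the estimate \eqref{w1} holds for $u^{rex}$ on the subcone $\Gamma'$, and Definition \ref{gnd} gives $(x_0,\xi_0) \notin WF_s(u)$, as required.
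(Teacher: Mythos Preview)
Your first inclusion is handled correctly by invoking Proposition~\ref{prop}(b), exactly as the paper does.

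For the second inclusion, however, there is a genuine gap. A single application of Leibniz's rule gains only \emph{one} power of $\langle\xi\rangle^{-1}$ in the error, not the $N$ powers that Definition~\ref{gnd} demands. Concretely: your ``main region'' error term, after division by $|P(\xi)|\gtrsim\langle\xi\rangle^{m}$, is of order $\langle\xi\rangle^{\,l-1}/\sqrt{vN}$ (for $|\alpha|=1$). But the wave-front estimate requires
\[
\frac{\langle\xi\rangle^{\,l-1}}{\sqrt{vN}}\;\le\;\frac{C^{n+1}n^{sn}}{\langle\xi\rangle^{n}}
\quad\text{for all }\xi\in\Gamma',\ n\le N,\ N>N_0,
\]
and this is impossible: with $N$ fixed and $n=N$, the left side grows in $|\xi|$ while the right side decays like $\langle\xi\rangle^{-N}$. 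No ``subtle interplay'' or choice of $v,N_0$ can repair this, because the inequality must hold for \emph{every} $\xi$ in the open cone, not only for $\xi$ near the optimal scale $|\xi|\sim (Cn^{s})$. The freedom to enlarge $v$ or $N_0$ only rescales constants; it cannot manufacture the missing $\langle\xi\rangle^{-N}$ decay.

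What is actually needed---and what the paper does, following H\"ormander's Theorem~8.6.1---is to iterate. One constructs an approximate solution $w_N$ of $^{t}P(e^{-i\langle x,\xi\rangle}w_N/P_m(\xi))=e^{-i\langle x,\xi\rangle}E_{r^2N}$ by truncating the Neumann series $\sum R^{j}E_{r^2N}$ at level $\sim 2N$, so that the remainder $e_N$ involves roughly $2N$ derivatives of the Gaussian window $E_{r^2N}$. The precise Hermite-type estimate \eqref{der} in Remark~\ref{im} is the crucial ingredient: it shows that $|\partial^{\alpha}E_{r^2N}|\le (er\sqrt{N})^{-|\alpha|}\alpha^{\alpha/2}$ for $|\alpha|\le 2N$, so the accumulated factor $(r\sqrt{N})^{-(2N-m)}$ exactly cancels the combinatorial growth, yielding $|e_N(x,\xi)|\lesssim\langle\xi\rangle^{-2N+m}$. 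Pairing this with the polynomial bound \eqref{gr} on $\widehat{u^{rex}}$ then gives the Gevrey estimate on the $e_N$-piece, while the $w_N$-piece is handled by a cone-splitting convolution argument analogous to the one in Proposition~\ref{prop}. Your Leibniz identity is essentially the first step of this Neumann iteration; the proof requires all $\sim N$ steps.
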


\par
\begin{rem}\label{small} Let $A>$ and $P_A(D)=\frac{1}{A}P(D)$. We can simply conclude that 
(\ref{pwf}) holds for $P(D)$ if and only if it holds for $P_A(D).$ This remark will be important in the proof which is to follow
when we need to have that 
$r_0=\sum_{|\alpha|\leq m}|a_\alpha|$ is enough small. This will be explained in the proof. 
\end{rem}
\begin{proof} 
Assume that $(x_0,\xi_0)$ does not belong to the right-hand side
of \eqref{pwf} i.e. there exist a  neighbourhood $U$  of $x_0$ and
an open conic neighbourhood $\Gamma$ of $\xi_0$
in $\mathbf R^n\setminus\{0\}$ such that
\begin{equation}\label{cond1}
P_m(\xi)\neq 0 \mbox{ in }  \Gamma,\quad (U\times \Gamma)\cap 
{\WF (s,P,f)}=\emptyset .
\end{equation}
We assume that $x_0=0$. We use the notation
$P(D)g=h$ such that $g$ satisfies \eqref{fbt} and consequently
$h$ satisfies \eqref{fbt}, with another exponent and with $h$
in place of $g$. Moreover, $h$ satisfies \eqref{w1}.

\par

We will follow the proof of Theorem 8.6.1 in\cite{Ho1}.
However, we make several important modifications 
which makes this proof different from that of quoted theorem in \cite{Ho1}.

\par

 We consider equation
\begin{equation}\label{e1}
(^tP(D)\fy )(x,\xi)= E_{N}(x)e^{-i\scal x\xi},
\quad x, \xi\in\rr d,\ N\in \mathbf Z_+.
\end{equation}
With
$$
\fy (x,\xi)=w(x)e^{-i\scal x\xi}/P_m(\xi),\quad x,\xi\in \rr d,
$$
as in \cite{Ho1}, one pass to an equation of the form 
\begin{equation}\label{e1}
w-Rw= E_{N},\quad R=R_1+\cdots +R_m,
\end{equation}
where $|\xi|^jR_j$ is a differential operator  of order less than or equal to $j$ 
and homogeneous of degree zero with respect to 
$\xi$ when $\xi\in \Gamma, j=1,\dots ,m$. Formally, a solution should
have a form $w=\sum_{j=0}^\infty R^j E_{N}$.

\par

Let $ x, \xi\in\mathbf R^d$ and
\begin{equation}\label{gar}
w_N(x,\xi)=\sum _{p=0}^{2N-m-1}\sum_{j_1+\cdots +j_k=p}
(R_{j_1}\cdots R_{j_k}
E_{N})(x,\xi),\quad  N\in\mathbf Z_+,
\end{equation}
where the composition $R_{j_1}\cdots R_{j_k}$ with $j_1+\cdots +j_k=p$
has the form
\begin{equation}\label{form}R_{j_1}\cdots R_{j_k}=|\xi |^{-p}
\sum_{|\alpha|\leq p}b_\alpha \partial_x^\alpha.
\end{equation}

\par

For the indices $j_1,\dots ,j_k,$ we introduce the set
$$
J_N= \bigcup _{k\ge 1}\sets {(j_1,\dots ,j_k)\in \nn k}
{j_2+\cdots +j_k<N\le j_1+j_2+\cdots +j_k}.
$$
Then
\begin{equation}\label{WRN}
w_N-Rw_N= E_{N}-\sum _{j_1,\dots ,j_k\in J_{2N-m}}
R_{j_1}\cdots R_{j_k} E_{N}.
\end{equation}

\par

By \eqref{WRN} we have
$$
^tP(D)(e^{-i\scal x\xi }w_N(x,\xi)/P_m(\xi))
=e^{-i\scal x\xi }(E_{N}(x)-e_{N}(x,\xi)),
$$
where
\begin{equation}\label{en}
e_{N}(x,\xi)=\sum _{j_1,\dots ,j_k\in J_{2N-m}}
(R_{j_1}\cdots R_{j_k} E_{N})(x,\xi).
\end{equation}
Then
\begin{multline*}
\mathcal F(gE_{N})(\xi)
\\[1ex]
=\mathcal F(g\cdo e_{N}(\cdo ,\xi))(\xi)+
\langle h e^{-i\scal \cdo \xi },w_N(\cdo ,\xi)/P_m(\xi)\rangle,\quad \xi \in \rr d.
\end{multline*}

\par

We need to estimate $e_N$ and begin with estimating $\sigma_p$, the number 
of operators $R_{j_1}\cdots R_{j_k}$, $j_1+\cdots +j_k=p$ of the  form \eqref{form}.
More precisely, we have to find out the number of presentations
$$
p=j_1+\cdots +j_k,\quad  j_i\in\{1,\dots ,m\},\ i=1,\dots ,k
$$
with $k\leq p$. Here $k=p$ when  $j_i=1, i=1,\dots ,p$.  One can find that
(with suitable $c>0$)
\begin{multline}\label{car}
\sigma_p\leq {{2p-1}\choose p}-{{2p-2m-3}\choose {p-m-1}}
\\[1ex]
\asymp
\frac{1}{2}
\left (
\frac{4^p}{\sqrt{\pi p}}-\frac{4^{p-m}}{\sqrt{\pi (p-m)}}
\right )
\leq c4^p.
\end{multline}
Let us explain this rough estimate. The number of $p$ units can be divided into $p$ boxes by  ${2p-1}\choose p$ ways but if one of boxes, at least, has 
$m+1$ units this possibility should be subtracted. One has ${2p-2m-3}\choose {p-m-1}$ such possibilities.

The summation over the set of indices in (\ref{en}),  
can be estimated by 
the number of terms in \eqref{en} multiplied by the maximal one.

\par

Next we estimate the number $s$ of terms in \eqref{en}.
If $p=2N-m-i$, $i=1,\dots ,m-1$, with application of $R_{j_1}$ on
$R_{j_2}\cdots R_{j_k},$ one can rich one of the members of the
sum  in \eqref{en}. The choice of $j_1$ depends on $i$ but the
number of such $j_1$ is less than $m(m-1)/2.$ Thus, by \eqref{car},
and with another constant $c$, we have
\begin{equation}\label{es}
s\leq c4^{2N-m}.
\end{equation}
With the similar argument we estimate $S$,
the number of terms in $w_N$:
\begin{equation}\label{Se}
S\leq c4^{2N-m},
\end{equation}
for some other constant $c$.

\par

With the notation of Remark (\ref{small}), we have
\begin{equation}\label{KSI}
|\xi |^p|R_{j_1}\cdots R_{j_k}E_{N}(x)|
\leq c r_0^p\sup_{|\alpha|\leq p}|\partial^\alpha_xE_{N}(x)|, x\in \rr d.
\end{equation}
Thus, \eqref{es} and \eqref{her} imply
\begin{multline*}
|\xi|^{2N-m}|e_N(x,\xi)|
\\[1ex]
\leq c(4r_0)^{2N-m}
(\frac{1}{e\sqrt {N}})^{2N-m}(2N-m)^{(2N-m)/2}e^{-|x|^2/(8N)}.
\end{multline*}
Now we use Remark(\ref{small}). From the early begining we should assume that $r_0$ is so small so that
 $4r_0/e<1$.Below, we will give one more condition on $r_0$. With this, we have
\begin{multline}\label{h4}
|e_N(x,\xi)|\leq c|\xi|^{-2N+m}\left (\frac{4r_0}{e} \right ) ^{2N-m}e^{-|x|^2/(8N)}
\\[1ex]
\leq c|\xi|^{-2N+m}e^{-|x|^2/(8N)}
\end{multline}

\par

By differentiating  $e_N(x,\xi)$ with respect to $x$ and taking the Fourier
transform with respect to $x$, it follows that if
$s=d+1$ if $d$ is odd or $s=d+2$, if $s$ is even, then
there exists $C>0$ such that
\begin{equation}\label{EN}
\sup _{\eta \in \rr d} \left |
(1+|\eta|^2)^{s/2} (\maclF e _{N,\xi})(\eta )
\right |
%
\leq C\eabs
\xi ^{-2N+m},\quad \xi \in \rr d,
\end{equation}
where $e_{N,\xi}(x)=e_N(x,\xi)$ is considered as a function in
$x$, parameterized by $N$ and $\xi$.

\par

In order to give more details we write
$$
(1-\Delta)^{s/2}=\sum _{|\beta|\leq s}c_\beta\partial_x^\beta ,
$$
and let $K=\sum _{|\beta|\leq s}|c_\beta|$. Then, by \eqref{en}, \eqref{Se}
and \eqref{EN}
\begin{multline*}
|\xi|^p|(1-\Delta)^{s/2}e_N(x,\xi)|
\\[1ex]
\le
\sum_{|\beta|\leq s}|c_\beta|\sum_{j_1,j_2,\cdots ,j_k\in J_{2N-m}}
|(R_{j_1}\cdots R_{j_k} \partial_x^\beta E_{N})(x,\xi)|
\\[1ex]
\leq K\sum_{j_1,j_2,\cdots ,j_k\in J_{2N-m}}c r_0^p
\sup_{|\alpha |\leq p,|\beta |\leq s}
|\partial^{\alpha+\beta}_xE_{N}(x)|, x\in \rr d
\\[1ex]
\leq 
cK(4r_0)^{2N-m}
 (\frac{1}{e\sqrt {N}})^{2N-m+s}(2N-m+s)^{(2N-m+s)/2}e^{-|x|^2/(8N)}.
\end{multline*}
Now, by the determined assumption on  $r_0$, we have
$$
\frac{4r_0\sqrt{2N-m-s}}{e\sqrt{N}}\leq 1,
$$
and obtain \eqref{EN}.

\par

By similar arguments it follows that \eqref{EN} holds true also
with the $L^1$ norm on the left hand side, provided the constant
$C$ has been replaced by a larger one if necessary.

\par

%
%
%
%
%
%
%
%
%

Since $g$ satisfies \eqref{fbt}, we have
$$
| \scal {g}{e^{-i\scal \cdo \xi }e_{N}(\cdo ,\xi)} |
= | (\widehat g*\widehat e_{N,\xi} )(\xi) |,\quad \xi \in \rr d,
$$
and
$$
| \scal {g} {e^{-i\scal x\xi }e_{N}} | \leq c\eabs \xi ^{-2N+l+m}.
$$
Thus, for $N_0=l+m$, we have
\begin{equation}\label{e33}
|\langle g,e^{-i\scal \cdo \xi } e_{N,\xi} \rangle |
\leq \frac{C^{n+1}n^{sn}}{\eabs \xi ^n},\quad
\xi \in \rr d,\ n\leq N,\ N>N_0.
\end{equation}
which is the searched estimate.

\par

Similarly concerning estimate of $e_N$, by \eqref{her}, \eqref{gar} and \eqref{Se},
we may conclude that 
$$
\left | \frac{D^\alpha _xw_N(x,\xi)}{P_m(\xi)} \right |
\leq c\eabs \xi ^{-2N}e^{-\frac{|x|^2}{8N}},\quad  x,\xi \in \rr d,
\ |\alpha |\le s
$$
and, for $s=d+1$ or $s=d+2$
there exists  $C>0$ such that
\begin{equation}\label{es1} 
|{P_m(\xi)}|^{-1} \nm {\widehat w_{N,\xi}\cdot \eabs \cdo ^s}{L^1(\mathbf R^d)}
 \leq C\eabs \xi ^{-2N},\quad \xi \in \rr d.
\end{equation}

\par

We shall estimate
$$
\left |\frac{\widehat w_N(x,\xi )}{P_m(\xi )}\maclF (hE_{N})(\xi ) \right |
$$
by using similar arguments as in the proof of Proposition \ref{prop}.
More precisely, let $\Gamma_1\subset\subset \Gamma$. Then (\ref{gam}) holds.
We have
$$
\left |\frac{\widehat w_{N}}{P_m}* (\maclF (hE_{N})) (\xi)\right |
\leq I_1(\xi )+I_2(\xi ),
$$
where
\begin{align*}
I_1(\xi ) &\equiv \int_ {|\eta|\leq c|\xi|}
\left | \frac{\widehat w_{N}(\eta,\xi)}{P_m(\xi )}\right |
| (\maclF ( hE_{N})) (\xi-\eta)|\, d\eta
\intertext{and}
I_2(\xi ) &\equiv \int _{|\eta | > c| \xi |}
\left | \frac{\widehat w_{N}(\eta,\xi)}{P_m(\xi)}\right |
| (\maclF ( hE_{N})) (\xi-\eta)|\, d\eta
\end{align*}

\par

For $I_1$ we have
$$
I_1(\xi )\leq \sup _{|\eta-\xi|<c|\xi|} | (\maclF (hE_{N}))(\eta) |\int
_{|\xi - \eta |\leq c|\xi |} \left |\frac{\widehat w_{N}(\xi -\eta ,\xi )}
{P_m(\xi )}\right |
\, d\eta .
$$
Let $n \leq N$. The estimate \eqref{w1} for $\maclF (hE_{N})(\xi-\eta)$,
\eqref{es1} and \eqref{gam} imply
\begin{multline}\label{1hand}
I_1(\xi )|\xi|^{n}
\\[1ex]
\le
(1-c)^{-d}\sup _{\eta \in \Gamma} | ( \maclF (hE_{N})) (\eta)|
|\eta |^{d}
\int _{|\eta | \geq (1-c)|\xi |}
{{\left | \frac{\widehat w_{N}(\xi -\eta,\xi)}{P_m(\xi)}\right |}}
\, d\eta
\\[1ex]
\leq C^{n+1}n^{sn},\quad n\leq N,\ N>N_0,\ \xi\in \Gamma_1,\ |\xi|>1.
\end{multline}

\par

In order to estimate $I_2$ we use
$$
|(\maclF (hE_{N}))(\xi)|\leq
C\eabs \xi ^{l},\quad \xi \in \rr d.
$$
$$
\int _{|\eta |>c|\xi |}
\left | \frac{\widehat w_{N}(\eta,\xi)}{P_m(\eta,\xi)}\right |
|(\maclF ( h E_{N})) (\xi-\eta)|\, d\eta\leq
$$
$$
\leq \int_{|\eta | >c|\xi |}
\eabs \xi ^l\eabs \eta ^l
\left | \frac{\widehat w_{N}(\eta,\xi)}{P_m(\eta,\xi)}\right |
|\eabs{\xi -\eta} ^{-l}
(\maclF ( hE_{N}))(\xi-\eta)|\, d\eta
$$
$$\leq
C \int_{|\eta |>c|\xi |}((1+c^{-1})^{l}\eabs \eta ^{2l} 
\left | \frac{\widehat w_{N}(\eta,\xi)}{P_m(\eta,\xi)}\right |
\, d\eta,
$$
where we have used the fact that $|\eta |>c|\xi |$ implies
 $|\xi-\eta|\leq (1+c^{-1})|\eta|$.

\par

Let $n<N.$ Then
\begin{multline*}
\eabs \xi ^n\int_{|\eta|>c|\xi|}
\left | \frac{\widehat w_{N}(\eta,\xi)}{P_m(\eta,\xi)}\right |
|(\maclF ( hE_{N}))(\xi-\eta)|\, d\eta
\\[1ex]
\leq C \eabs \xi ^n\int_{\rr d}
\eabs \eta ^{2l} \left | \frac{\widehat w_N(\eta,\xi)}{P_m(\eta,\xi)}
\right |\, d\eta.
\end{multline*}
By similar arguments as in Remark \ref{rr} we get
\begin{multline}\label{dod6}
\eabs \xi ^n\int_{|\eta|>c|\xi|}
\left | \frac{\widehat w_{N}(\eta,\xi)}{P_m(\eta,\xi)}\right |
| (\maclF ( hE_{N}))(\xi-\eta)|\, d\eta.
\\[1ex]
\leq C^{n+1}n^{sn}\eabs \xi ^{n-2N},\quad  \xi\in \rr d,\ n<N,\ N>N_0.
\end{multline}
The result now follows from \eqref{e33}, \eqref{1hand},  and \eqref{dod6}.
\end{proof}


\begin{thebibliography}{2000}

\bibitem{bh} J. Boman, L. H{\"o}rmander \emph{Classes of infinite
differentiable functions}, (mimeographed notes) (Swedish),
Stockholm 1962.

\bibitem{mcrs} M. Cappiello, R. Schulz \emph{Microlocal analysis of
quasianalytic Gelfand-Shilov type ultradistributions} (2013),
arXiv:1309.4236.

\bibitem{CPRT1} E. Cordero, S. Pilipovi\'c, L. Rodino, N. Teofanov
\emph{Localization operators and exponential weights for modulation
spaces}, Mediterr. J. Math. \textbf{2} (2005), 381--394.

\bibitem{chungkim}
 S.Y. Chung, D. Kim  \emph{A quasianalytic singular spectrum with respect
 to the Denjoy-Carleman class}, Nagoya Math. J. \textbf{148} (1997), 137--149.

\bibitem{cnr} E. Cordero, F. Nicola, L. Rodino  \emph{Propagation of the Gabor wave front set for Schr�dinger equations with non-smooth potentials},
 Rev. Math. Phys., \text{27} (2015), 33 pp.

 \bibitem{coriascomanicia} S. Coriasco, L. Maniccia
\emph{Wave-front set at infinity and hyperbolic linear operators
with multiple characteristics}, Ann. Global Anal. Geom. \textbf{24} (2003), 375--400.

\bibitem{coriascojohansontoft} 
S. Coriasco, K. Johansson, J. Toft \emph{Global wave-front sets of Banach,
Fr{\'e}chet and modulation space types, and pseudo-differential operators},
 J. Differential Equations \textbf{254} (2013),  3228--3258. 
 
\bibitem{coriascosulc} S. Coriasco, R. Schulz
\emph{The global wave-front set of tempered oscillatory integrals with
inhomogeneous phase functions}, J. Fourier Anal. Appl. \textbf{19} (2013),
1093--1121.  

\bibitem{GS} I. M. Gelfand, G. E. Shilov Generalized Functions II,
Academic Press, New York, 1968.


\bibitem{Ho1} L. H{\"o}rmander The Analysis of Linear Partial
Differential Operators, I, Berlin-Heidelberg-New York-Tokyo,
Springer-Verlag 1983.

\bibitem{Ho2}
L. H{\"o}rmander \emph{Quadratic hyperbolic operators, {\rm{in: L. Rodino,
L. Cattabriga (eds.), Microlocal Analysis and Applications}}}, LNM \textbf{1495},
(1991), pp. 118--160.

\bibitem{K1} H. Komatsu \emph{Ultradistributions  I, Structure
theorems and  a characterization}, J. Fac. Sci. Univ. Tokyo Sect. IA
\textbf{20} (1973), 25--105.

\bibitem{K2} H. Komatsu \emph{An introduction to the theory of generalized
functions}, Department of Mathematics Science University of Tokyo,
1999.

\bibitem{LR} O. Liess, L. Rodino \emph{Fourier integral operators
and inhomogeneous Gevrey classes}, Ann. Mat. Pura  Appl.
\textbf{150} (1988), 167--262

 \bibitem{martinez} 
 A. Martinez An Introduction to Semiclassical and Microlocal Analysis, Uni-
versitext, Springer, New York, 2002. 
 
 \bibitem{mns}
  A. Martinez, S. Nakamura and V. Sordoni \emph{Analytic wave-front set for solu-
tions to Schr{\"o}dinger equations}, Adv. Math. \textbf{222} (2009),  1277--1307.
    
  \bibitem{melrose}  R. Melrose 
  \emph{Spectral and scattering theory for the Laplacian on asymptotically Euclidian spaces},
   In �Spectral and scattering theory�, Sanda 1992. Lecture Notes in Pure and Appl. Math. \textbf{161}, Dekker, New York, 85--130 (1994).

\bibitem{mizahura} 
   R. Mizuhara, \emph{Microlocal smoothing effect for the Schr?odinger equation in a Gevrey class}, J. Math. Pures Appl.  \textbf{91} (2009), 115--136.
   
\bibitem{nakamura}   
 S. Nakamura \emph{Propagation of the homogeneous wave-front set for Schr\" odinger equations}, Duke Math. J. \textbf{126}  (2005), 349--367.


\bibitem{P} S. Pilipovi\' c
\emph{Microlocal analysis of ultradistributions}, Proc. Amer. Math. Soc.
\textbf{126} (1998), 105--113.

\bibitem{PTT} S. Pilipovi\'c, N. Teofanov, J. Toft
\emph{Micro-local analysis in Fourier Lebesgue and modulation spaces. Part
I}, J. Fourier Anal. Appl. \textbf{17} (2011), 374--407.

\bibitem{PTT2} S. Pilipovi\'c, N. Teofanov, J. Toft
\emph{Micro-local analysis in Fourier Lebesgue and modulation spaces. Part
II}, J. Pseudo-Differ. Oper. Appl. \textbf{1} (2010), 341--376.

\bibitem{RZ} L. Robbiano, C. Zuily \emph{Analytic theory for the quadratic scattering wave
front set and application to the Schr\" odinger equation. Ast\' erisque} \textbf{283}
(2002).

\bibitem{R} L. Rodino Linear Partial Differential Operators and Gevrey Spaces,
World Scientific, New York, 1993.

\bibitem{rw} L.Rodino P. Wahlberg \emph{The Gabor wave front set}, Monatsh. Math. \textbf{173} (2014),  625?655.

\bibitem{ScW} R. Schulz, P. Wahlberg  \emph{The equality of the homogeneous and the Gabor wave front set} (2013),
arXiv:1304.7608

\bibitem{ScW2} R. Schulz, P. Wahlberg \emph{Microlocal properties of Shubin pseudodifferential
and localization operators}, J. Pseudo-Differ. Oper. Appl. \textbf{7}, (2016) 91?111.

\bibitem{sjos} J. Sj{\"o}srand Singularit\' es analytiques microlocales.
Ast\' erisque \textbf{95} (1982), 1--166.



\bibitem{W} J. Wunsch, \emph{Propagation of singularities and growth for Schr?odinger operators},
Duke Math. J. \textbf{98} (1999), 137?186.

\end{thebibliography}
\end{document}